\newcommand{\E}{\mathbb{E}}
\newcommand{\N}{\mathbb{N}}
\newcommand{\R}{\mathbb{R}}
\newcommand{\Z}{\mathbb{Z}}
\theoremstyle{plain}
\newtheorem{theorem}{Theorem}[section]
\newtheorem{lemma}[theorem]{Lemma}
\theoremstyle{definition}
\newtheorem{definition}[theorem]{Definition}
\begin{document}

\title {On quantitative noise stability and influences for discrete and continuous models}
\author{\sc{Rapha{\"e}l Bouyrie } \\ 
\\ \textit{University of Marne--La--Vall\'ee, France}}
\date{}
\maketitle

\begin{abstract}
\textit{Keller and Kindler recently established a quantitative version of the famous Benjamini~--Kalai--Schramm Theorem on noise sensitivity of Boolean functions. The result was extended to the continuous Gaussian setting by Keller, Mossel and Sen by means of a Central Limit Theorem argument. In this work, we present an unified approach of these results, both in discrete and continuous settings. The proof relies on semigroup decompositions together with a suitable cut-off argument allowing for the efficient use of the classical hypercontractivity tool behind these results. It extends to further models of interest such as families of log-concave measures and Cayley and Schreier graphs. In particular we obtain a quantitative version of the B-K-S Theorem for the slices of the Boolean cube.} \\

\noindent MSC: 60C05; 05D40 \\
Keywords: \textit{Boolean cube, Schreier graphs, Gaussian measure, Influence, Noise stability, Hypercontractivity.}
\end{abstract}

\section{Introduction}

The notion of influences of variables on Boolean functions has been extensively studied
over the last twenty years, with applications in various areas such as combinatorics, statistical physics
and theoretical computer science, in particular cryptography and computational lower bounds
(see e.g. the survey \cite{K-S}). 
Similarly, the noise sensitivity of a Boolean function is a measure of how its values are likely to change under a slightly perturbed input.
Noise sensitivity became an important concept which is useful in many fields such as for
instance percolation, from which 
it was originally defined in \cite{B-K-S}. For an overview of the topic, see e.g. the book \cite{G-S}, in which the noise sensitivity concept 
and the results of \cite{B-K-S} are presented in the context of percolation theory.

Noise sensitivity and influences are closely related.
In this work, we will be concerned with recent connections between
influences and asymptotic noise sensitivity. 
To start with, let us recall these two important concepts on the discrete cube $\{-1,1\}^n$.
Rather than noise sensitivity, we describe the related notion of
noise stability. Throughout this paper, denote by $\nu$ the product measure
$(p \delta_{-1} + (1-p) \delta_1)^{\otimes n}$ on $\{-1,1\}^n$ where $p \in (0,1)$.

Let $A \subset \{-1,1\}^n$. For $x = (x_1, \ldots, x_n) \in \{-1,1\}^n$
and $i = 1, \ldots, n$, let $\tau_i x \in \{-1,1\}^n$ be the vector obtained 
from $x$ by changing $x_i$ in $-x_i$ and leaving the other coordinates unchanged.
The \textit {influence} of the $i$-th coordinate on a function $f : \{-1,1\}^n \to \R$ is given by 
$$
I_i (f) = \| f(x) - f(\tau_i x) \|_{L^1 (\nu)}, 
$$ 
Similarly, for sets $A \subset \{-1,1\}^n$, the influence of the $i$-th coordinate is defined using characteristic functions by $I_i(A) = I_i({\bf 1}_A)$. Notice then that in the uniform case ($p = 1/2$), it holds
$$
I_i (A) = 2 \nu \big \{x \in \{-1, 1\}^n; \; x \in A, \, \tau_i x \notin A  \big \}.
$$

Turning to noise stability, let $\eta \in (0,1)$ and let
$X = (X_1, \ldots, X_n)$ be distributed according to $\nu$ on
$\{-1,1\}^n$. Let $X^{\eta} = (X_1^{\eta}, \ldots, X_n^{\eta})$ be a
$(1-\eta)$-correlated copy of $X$, that is $ X_j^{\eta} =  X_j$ with probability $1-\eta$ and
$ X_j^{\eta} =  X_j'$ with probability $\eta$ where $X'$ is an independent copy of $X$. 
For a function $f : \{-1,1\}^n \to \R $, following the notations of \cite{K-M-S2}, define its \textit{noise stability}  as
$$
\mathcal{S}_{\eta}^{c} (f) 
= \E_{\nu} [f(X)f(X^{\eta})] - \E_{\nu} [f(X)]^2.
$$

Similarly, for a subset $ B \subset \{-1,1\}^n$, define its noise stability by $\mathcal{S}_{\eta}^{c} (B)  := \mathcal{S}_{\eta}^{c} ({\bf 1}_B)$.  Thus, the noise stability of a function $\mathcal{S}_{\eta}^{c}(f)$ is a measure of the sensitivity of $f$ to a small noise $\eta$ in its input. A sequence of functions/sets $(f_{n_{\ell}})_{n_{\ell}}, (B_{n_{\ell}})_{n_{\ell}}$ over  $\{-1,1\}^{n_{\ell}}$, $ \ell \in \N$, where $n_\ell \nearrow \infty$, is said to be \textit{(asymptotically) noise sensitive} if its noise stability tends to $0$ as $\ell $ tends
to $\infty$, that is 
$$
\lim_{\ell \to \infty} \mathcal{S}_{\eta}^{c} (f_{n_{\ell}}) = 0
$$ 
or
$$
\lim_{\ell \to \infty} \mathcal{S}_{\eta}^{c} ( B_{n_{\ell}}) = 0
$$ 
for each $\eta \in (0,1)$ (keeping in mind $\eta$ small but fixed).

A first connection between noise sensitivity and influences have been established by
Benjamini, Kalai and Schramm in the paper \cite{B-K-S}.
They gave a criterion for a sequence a sets to be noise sensitive in terms of the sum of squares of the influences. More precisely, one of the main results of \cite{B-K-S} is the following theorem.
\begin{theorem} \label {bks}
Let $B_{\ell} \subset \{-1,1\}^{n_{\ell}}$, $ \ell \in \N$. If
\begin{equation}
\lim_{\ell \rightarrow \infty} \sum_{i=1}^{n_{\ell}} I_i(B_{\ell})^2 = 0, \label{eq.1}
\end{equation}
then  ${(B_{\ell})}_{\ell \in \N}$ is asymptotically noise sensitive.
\end{theorem}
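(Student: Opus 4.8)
The plan is to move everything to the Fourier--Walsh side, cut the spectrum at a finite degree, and handle the surviving low-degree window by hypercontractivity applied to the discrete derivatives, the high-degree remainder being free. Since passing from $\mathbf{1}_{B_\ell}$ to $2\,\mathbf{1}_{B_\ell}-1$ alters $\mathrm{VAR}(\cdot,\eta)$ and each influence only by universal constants, I may assume each $f=f_\ell\colon\{-1,1\}^{n_\ell}\to\{-1,1\}$. Expanding $f=\sum_S\widehat f(S)\,\chi_S$, $\chi_S=\prod_{i\in S}x_i$, and using that $X_j$ and $X_j^\eta$ have correlation $1-\eta$, one computes
\[
\mathrm{VAR}(f,\eta)=\sum_{\emptyset\neq S}(1-\eta)^{|S|}\,\widehat f(S)^2 ,
\]
while $\sum_{S\ni i}\widehat f(S)^2$ is a fixed positive multiple of $I_i(B_\ell)$. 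Writing $m_\ell=\sum_i I_i(B_\ell)^2$, the hypothesis reads $m_\ell\to0$; in particular $\max_i\sum_{S\ni i}\widehat{f_\ell}(S)^2\to0$.

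Fix $\eta\in(0,1)$ and $\varepsilon>0$. For an integer $k$ to be chosen, I would split
\[
\mathrm{VAR}(f,\eta)=\sum_{1\le |S|\le k}(1-\eta)^{|S|}\,\widehat f(S)^2 \;+\; \sum_{|S|>k}(1-\eta)^{|S|}\,\widehat f(S)^2 .
\]
The second sum is at most $(1-\eta)^k\sum_S\widehat f(S)^2=(1-\eta)^k$, so choosing $k=k(\eta,\varepsilon)$ with $(1-\eta)^k<\varepsilon/2$ disposes of it uniformly in $\ell$. The first sum is at most the low-degree weight $\sum_{j=1}^k W^{=j}(f)$, where $W^{=j}(f)=\sum_{|S|=j}\widehat f(S)^2$, so it remains to prove that $\sum_{j=1}^k W^{=j}(f_\ell)\to0$ as $\ell\to\infty$, with $k$ fixed.

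Here the discrete derivatives $g_i=\sum_{S\ni i}\widehat f(S)\,\chi_{S\setminus\{i\}}$ enter. Each $g_i$ is independent of $x_i$ and $\{-1,0,1\}$-valued, so $\|g_i\|_q=\bigl(\sum_{S\ni i}\widehat f(S)^2\bigr)^{1/q}$ for every $q>0$. Starting from the identity $\sum_i\|P_{=j-1}g_i\|_2^2=j\,W^{=j}(f)$, where $P_{=j-1}$ is the projection onto the $(j-1)$st Walsh level, the self-adjointness of the noise operator $T_\rho$ together with the fact that $T_\rho$ acts as the scalar $\rho^{j-1}$ on that level gives $\|P_{=j-1}g_i\|_2\le\rho^{-(j-1)}\|T_\rho g_i\|_2$; combining with the Bonami--Beckner inequality $\|T_\rho g_i\|_2\le\|g_i\|_{1+\rho^2}$ yields, for every $\rho\in(0,1)$,
\[
W^{=j}(f)\le \rho^{-2(j-1)}\sum_i\Bigl(\sum_{S\ni i}\widehat f(S)^2\Bigr)^{2/(1+\rho^2)} .
\]
Summing over $j=1,\dots,k$ and tuning $\rho$ appropriately, one wants to turn the right-hand side into $C(\eta,\varepsilon)$ times a positive power of $m_\ell$, hence $<\varepsilon/2$ for large $\ell$; with the tail bound this gives $\mathrm{VAR}(f_\ell,\eta)<\varepsilon$ for $\ell$ large, which is the claim.

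The hard part is precisely this last conversion, and it is why the cut-off and the semigroup have to be dovetailed rather than used separately. A single crude use of hypercontractivity only bounds the low-degree weight by $\sum_i\bigl(\sum_{S\ni i}\widehat f(S)^2\bigr)^{\beta}$ with $\beta=2/(1+\rho^2)<2$, and such a sum need not vanish merely because $\sum_i I_i(B_\ell)^2$ does: functions carrying very many influences of comparable, moderately small size --- iterated majorities are the model example --- defeat any fixed choice of $\rho$. One must therefore interpolate the hypercontractive bound for $\|P_{=j-1}g_i\|_2$ with the trivial estimate $\|P_{=j-1}g_i\|_2^2\le\sum_{S\ni i}\widehat f(S)^2$ and let the semigroup parameter respond to the size of the $i$-th influence (equivalently, let the cut-off degree and $\rho$ vary together with $\ell$), so that after summation a genuine power of $\sum_i I_i(B_\ell)^2$ appears, uniformly over all Boolean functions. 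Carrying out that optimisation is the substance of the proof; the rest is the Fourier bookkeeping sketched above.
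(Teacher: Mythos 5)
Your route is the classical Fourier--Walsh one (essentially that of [B-K-S] and [K-K]), not the paper's: the paper never argues on the spectral side at all, but deduces Theorem \ref{bks} immediately from its quantitative bound $\mathrm{VAR}(f,\varepsilon)\le 7\,\mathcal{W}(f)^{\varepsilon/4}$ (Theorem 4.2 / Corollary \ref{bld2}), which is itself proved by decomposing $\mathrm{Var}_\nu(T_tf)$ as a time integral of $\sum_i\|D_iT_sf\|_2^2$ and cutting off on the \emph{size} of $|D_iT_sf|$ rather than on the Fourier degree. Your bookkeeping up to the inequality $W^{=j}(f)\le\rho^{-2(j-1)}\sum_i\bigl(\sum_{S\ni i}\widehat f(S)^2\bigr)^{2/(1+\rho^2)}$ is correct, as is the disposal of the high-degree tail.

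The proof is nevertheless incomplete, and the missing step is exactly the one that carries the whole theorem. As you yourself observe, $\sum_i I_i^{\gamma}$ with any fixed $\gamma<2$ need not tend to $0$ when $\sum_i I_i^2$ does (take $I_i\approx n^{-1/2-\epsilon}$ with $\epsilon$ small), and no per-coordinate interpolation between the trivial bound $\|P_{=j-1}g_i\|_2^2\le I_i$ and the hypercontractive one can reach the exponent $2$: interpolation gives $I_i$ raised to $1+\theta\frac{1-\rho^2}{1+\rho^2}<2$ for every admissible $\theta\in[0,1]$, $\rho\in(0,1)$, with the prefactor blowing up as $\rho\to0$ when $j\ge2$. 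You describe the correct remedy in words ("let the parameters respond to the size of the $i$-th influence") but do not execute it, and explicitly defer "the substance of the proof". To close the gap one can threshold: with $\delta_\ell=\sum_iI_i^2$ and $\tau=\delta_\ell^{1/2}$, bound the coordinates with $I_i\ge\tau$ by the trivial estimate, giving a contribution $\le\sum_{I_i\ge\tau}I_i\le\delta_\ell/\tau=\delta_\ell^{1/2}$, and the coordinates with $I_i<\tau$ by hypercontractivity at a fixed $\rho$, giving $\le\rho^{-2(j-1)}\tau^{-(2-4/(1+\rho^2))}\delta_\ell$; for fixed $k$ and $\rho$ both contributions are $O_k(\delta_\ell^{1/2})\to0$. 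Without this (or an equivalent global ingredient --- in the paper's proof the role of the missing factor is played by the identity $\sum_i\int_0^\infty\|D_iP_{s/2}f\|_2^2\,ds=\mathrm{Var}_\nu(f)\le\|f\|_2^2$), the argument as written does not prove the statement.
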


In fact, BKS only proved this theorem for the uniform measure (i.e., in the case $p = 1/2$). The general result, including the case of biased measure (assuming the bias $p$ does not depend on $n$), was proved recently by Keller and Kindler \cite{K-K}, who moreover established a quantitative version of this result. The proof of this quantitative bound
uses the Fourier--Walsh expansion of functions of the discrete cube together with hypercontractive bounds to control the tails of chaos in the expansion (hypercontractivity is actually also at the root of the asymptotic Theorem~\ref {bks}).
The result is stated below as \eqref {eq.noise1},
but before we present the analogous
continuous version which has a similar form.

The quantitative version of \cite{K-K} has been extended to 
the continuous Gaussian setting by Keller, Mossel and Sen \cite{K-M-S2}.
To state the result, we need to introduce the corresponding definitions. 
Let $d\mu (x) = e^{-|x|^2/2} \frac {dx}{(2\pi)^{n/2}}$
be the canonical Gaussian measure on $\R^n$. For $W, W'$ independent
with distribution $\mu$, and $\eta >0$,
set 
$$
W^{\eta} = \sqrt{1-\eta^2} \, W + \eta W',
$$ 
so that $(W,W^{\eta})$ is a $\sqrt{1-\eta^2}$-correlated Gaussian vector. 
For $f : \, \R^n \to \R$, providing that $\| f(W) \|_{L^2(\mu)} < \infty$,
define the Gaussian noise stability of $f$ as 
$$
\mathcal{S}_{\eta}^{\mathcal{G}} (f) = \E_{\mu} [f(W)f(W^{\eta})] - \E_{\mu} [f(W)]^2.
$$  
Similarly, for a (Borel measurable) subset $A \subset \R^n$, set
$$
\mathcal{S}_{\eta}^{\mathcal{G}} (A) = \mathcal{S}_{\eta}^{\mathcal{G}} ({\bf 1}_A).
$$

In order to discuss the analogue of Theorem~\ref {bks} in the Gaussian case, 
it is necessary to define the
influence of the $i$-th coordinate on a subset $A \subset \R^n$ in this context. To this aim, in a continuous
setting, the authors of \cite{K-M-S2} introduce the notion of geometric influence  defined by
$$
I_i^{\mathcal{G}} (A) = \E_{x} [\mu^+ (A_i^x)].
$$
In the latter expression, $A_i^x \subset \R$ is the restriction of $A$ along the fibre of
$x = (x_1, \ldots, x_n) \in \R^n$, that is 
$$
A_i^x = \{y \in \R, \, (x_1, \ldots, x_{i-1},y, x_{i+1}, \ldots, x_n) \in A \}
$$
and $\mu^+$ denotes the lower Minkowski content, that is for 
any Borel measurable set $D \subset\R$,
$$
\mu^+(D) =  \liminf\limits_{r \to 0} \frac{\mu(D+[-r,r]) - \mu(D)}{r}
$$
(where $\mu$ here is the standard Gaussian distribution on $\R$).

From a more intuitive point of view, for each $i \in \{1, \ldots, n \}$, $I_i^{\mathcal{G}} (A)$ is obtained as a limit of $\|\partial_i f_{\varepsilon_k}  \|_{L^1(\mu)}$ for a sequence of smooth functions $(f_{\varepsilon_k})_{\varepsilon_k \geq 0}$ such that $\lim_{\varepsilon_k \to 0} f_{\varepsilon_k} = {\bf 1}_A$.

We refer the reader to \cite{K-M-S1}, \cite{K-M-S2} for further developments on geometric influences and their applications.

In both the cube (with bias $p$) and Gaussian settings, the quantitative noise sensitivity bounds 
of \cite{K-K} and \cite{K-M-S2} may be then expressed in the following form: for any sets $A \subset \{-1,1\}^n,$  and any $\eta \in (0,1)$,
\begin{equation} \label{eq.noise1}
 \mathcal{S}_{\eta}^{c}(A) 
    \leq C_1 \bigg( \sum_{i=1}^n  I_i (A)^2 \bigg)^{C_2(p) \eta} 
\end{equation}
for the discrete cube where $C_1$ is numerical, and for any sets $A \subset \R^n$ and any $\eta \in (0,1)$, 
\begin{equation} \label{eq.noise2}
 \mathcal{S}_{\eta}^{\mathcal{G}} (A) 
    \leq C_1 \bigg( \sum_{i=1}^n (I_i^{\mathcal{G}} (A))^2 \bigg)^{C_2 \eta^2} 
\end{equation}
for the Gaussian space, where $C_1, \, C_2$ are some positive numerical constants.  
These results actually are proved in theirs functional forms, that is 
for any bounded (by $1$) function $f$ defined respectively in the discrete cube and in the Gaussian space ($C^1$ smooth in the latter case), it holds
$$
\mathcal{S}_{\eta}^{c} (f) 
    \leq C_1 \bigg( \sum_{i=1}^n  I_i (f)^2 \bigg)^{C_2 \eta}  \quad \mathrm{and} \quad   \mathcal{S}_{\eta}^{\mathcal{G}} (f) 
    \leq C_1 \bigg( \sum_{i=1}^n \| \partial_i f \| _1^2 \bigg)^{C_2 \eta^2}.
$$
   
The results in \cite{K-K} and \cite{K-M-S2} actually extend for functions in $L^2(\mu)$.
Indeed, the proof of the Gaussian result in \cite{K-M-S2} relies 
on the result of the discrete cube in \cite{K-K} together
with an appropriate Central Limit Theorem argument. 
A close inspection of the arguments from \cite{K-K} actually reveals that the Fourier--Walsh
decomposition approach may be adapted to a Fourier--Hermite decomposition in the Gaussian case
so to yield the same conclusion. Therefore, the boundedness of the functions
can be weakened to obtain the same result for functions in $L^2(\mu)$.

The Fourier decomposition approach is however
somewhat limited to the examples of the discrete cube and the Gaussian space. Indeed, a key property is
the fact that the elements of the orthogonal basis of the underlying space are eigenvectors of the 
corresponding semigroup. Spaces satisfying such a property have been characterized in \cite{Ma}. In this paper, 
we develop a new, simpler, proof of these quantitative relationships \eqref {eq.noise1} and \eqref {eq.noise2} 
which moreover applies to more general examples. Once again, the main ingredient will be hypercontractivity
as in \cite{B-K-S}, \cite{K-K}, \cite{K-M-S2}. The starting point of the 
proof follows closely the work of \cite{CE-L} which generalizes  
Talagrand's inequality of \cite{T1} for more general models. The framework of \cite{CE-L} applies in a rather 
general context where hypercontractivity holds together with specific commutation
properties. It covers the product of (strictly) log-concave probability measures and
also discrete examples, including the biased discrete cube or, more general Schreier or Cayley graphs.

In this framework, we will establish quantitative relationships between noise stability and 
influences strengthening therefore the results 
of \cite{B-K-S}, \cite{K-K} and \cite{K-M-S2}, under furthermore weaker assumptions.  
The proposed simpler and more efficient proof relies on semigroup decompositions and 
cut-off arguments. 

The general setting contains two main illustrations,
probability measures on finite state spaces that are invariant for some Markov kernel 
and continuous product probability 
measures on $\R^n$ where each measure is of the form $d\mu(x) = e^{-V(x)}dx$ for $V$ a smooth potential.  The following is a sample illustration of the results of this work
(in the continuous setting). More complete statements will be presented in the process of
the paper.

\begin{theorem} \label {blg1}
Let $(\R^n, \mu^{\otimes n})$ with $\mu$ a probability measure on $\R$ of the form $ d\mu (x) = e^{-v(x)} dx$
where $ v''\geq c$ uniformly for some $c > 0$. Denote by $\mathcal{S}_{\eta}$ the noise stability with respect to $\eta$ in this context.  
Then, for any $\eta \in [0,1)$, and any Borel measurable set $A \in \R^n$, 
there exists positive constants $C, c_1$ depending only on $c$ such that
$$
\mathcal{S}_{\eta} (A)  \leq C 
   \bigg( \sum_{i=1}^n (I_i^{\mathcal{G}} (A)) ^2 \bigg)^{c_1 \eta^2} \mu^{\otimes n}(A)^{2 - 2c_1 \eta^2}.
$$
\end{theorem}

The generalized definition of the noise stability in this context will be given in the next section. 
In particular, for the standard Gaussian measure, $\mathcal{S}_{\eta} = \mathcal{S}_{\eta}^{\mathcal{G}}$ and so we recover the quantitative estimate \eqref{eq.noise2} 
established by Keller, Mossel and Sen.

With a common scheme of proof, an analogous statement is established for discrete models
covering in particular the discrete cube
endowed with any biased measure ${(p \delta_{-1} + (1-p) \delta_1)^{\otimes n}}$. The precise setting and the according notion of influence will be presented in subsection $2.3$.  

\begin{theorem} \label {bld1}
Let $(\Omega, \mu)  = (\Omega_1 \times \ldots \times \Omega_n, \mu_1 \otimes \ldots \otimes \mu_n)$ be a Cartesian product of finite probability spaces. Denote by $\mathcal{S}_{\eta}$ the noise stability with respect to $\eta$ in this context.  
Then, for any $\eta \in [0,1)$, and any set $A \subset \Omega$, 
there exists some numerical constants $C, c_1 >0$  such that
$$
\mathcal{S}_{\eta} (A)  \leq C 
   \bigg( \sum_{i=1}^n (I_i (A)) ^2 \bigg)^{c_1 \frac{\rho}{\lambda} \eta} \mu (A)^{2 - 2 \frac{\rho}{\lambda} c_1 \eta},
$$
where $\lambda$ and $\rho$ designed respectively the spectral gap and the Sobolev logarithmic constants of the product graph $\Omega$.
\end{theorem}
The definitions of $\lambda$ and $\rho$ will be given in subsections $2.4$ and $2.5$. In particular if $\Omega = \{-1,1\}^n$, $\mu = \nu$, $\mathcal{S}_{\eta}  = \mathcal{S}_{\eta}^c$ and we will see that $\rho = 2\frac{\log p - \log (1-p)}{2p -1}$, $\lambda =1$ so that the above theorem extends the quantitative estimate \eqref{eq.noise1} of Keller and Kindler.

\vskip 2mm

The note is organized as follows. In Section~2, we first describe a convenient abstract framework and recall some basic facts about Markov semigroups that will be used in the proofs of our results. Then, in subsection $2.1, 2.2$ and $2.3$, after developing the examples of the Gaussian space and the discrete cube, we present the general setting in which our results fall in. Then, we present our generalized  definition of noise stability in subsection $2.4$  and finally in subsection $2.5$ we describe further tools required in the proofs of our results, in particular the hypercontractive property.
In Section~3, we establish the announced result on ($\R^n, \, d\mu(x) =  \bigotimes_{i=1}^m e^{-V_i(x)}dx$) 
when the potentials $V_i$ are convex. This more general result encircles Theorem \ref {blg1}. Section~4 is devoted to the discrete case. Firstly, we focus on product spaces, proving Theorem \ref {bld1} in a slightly more general form, and then we turn to the case of Cayley or Schreier graphs. In particular, we prove that the analogous result holds for the slices of the Boolean cube. We then briefly conclude in Section~5 with a similar inequality on the Euclidean spheres.

\section{A general framework}

This section presents the framework and the main tools that will be required in the proofs. The setting
emphasized here is pretty general, but for the
sake of clarity, we discuss in subsections the two main cases that are at the starting point of 
this investigation, i.e. the Gaussian space and the discrete cube endowed with uniform measure.

Let $(\Omega, \mathcal{A}, \mu)$ be a probability space.
For a function $f : \Omega \to \R$ in $L^2(\mu)$, 
denote its variance with respect to $\mu$ by
$$ 
\mathrm{Var}_{\mu} (f) = \int_{\Omega} f^2 d\mu - \bigg( \int_{\Omega} f d\mu \bigg)^2.
$$ 
In the same way, if $f \geq 0$, provided it is well defined, we denote its entropy with respect to $\mu$ by
$$ 
\mathrm{Ent}_{\mu} (f) = \int_{\Omega} f \log f d\mu 
     - \int_{\Omega} f d\mu \log \bigg(  \int_{\Omega} f d\mu \bigg).
$$ 

\par

The main argument of the proof will be based on interpolation along a Markov semigroup with 
invariant measure $\mu$. We refer to the general
references \cite{Ba}, \cite{Aal}, \cite{B-G-L} for complete accounts on
Markov semigroups. For the reader's convenience, we briefly recall here a few basic aspects,
illustrated next on the two basic model examples.

A family $(P_t)_{t \geq 0}$ of operators
acting on a domain $\mathcal{D}$ of functions
on $\Omega$ is said to be a semigroup if $P_0 = \mathrm{Id}$ and, for all  $ s,t \geq 0$,
$P_{t+s} = P_t \circ P_s$. The semigroup $(P_t)_{t \geq 0}$ 
is said to be Markov if for all $ t \geq 0$, $ P_t {\bf 1} = {\bf 1}$.
The infinitesimal generator $L$ of $(P_t)_{t \geq 0}$  is defined by 
$$ 
\forall f \in \mathcal{D}_2(L), \, Lf := \lim_{t \to 0} \frac{P_t f - f}{t}
$$
where the Dirichlet domain $\mathcal{D}_2(L) \subset \mathcal{D}$
is the set of all functions $f$ in $L^2(\mu)$ for which the above limit exists. 
Conversely, $L$ and $\mathcal{D}_2 (L)$ completely determine $(P_t)_{t \geq 0}$.
By definition and the semigroup property 
$\frac{\partial}{\partial t} P_t f = L P_t f$ and $P_0 f = f$, justifying
the intuitive notation $P_t = e^{tL}$. 

Given such a Markov semigroup $(P_t)_{t \geq 0}$, the 
measure $\mu$ is said to be reversible with respect to $(P_t)_{t \geq 0}$ if
$$
\forall f, g \in L^2(\mu), \, \int_{\Omega} f Lg \, d\mu = \int_{\Omega} g Lf \, d\mu,
$$ 
and invariant with respect to $(P_t)_{t \geq 0}$ if 
$$
\forall f \in L^1(\mu), \, \int_{\Omega} P_t f  \, d\mu = \int_{\Omega} f \, d\mu.
$$
The Dirichlet form associated to $(L, \mu)$ is the bilinear symmetric operator
$$ 
\mathcal{E}(f,g) = \int_{\Omega} f(-Lg) d\mu $$
on suitable real-valued functions $f,g$ in the Dirichlet domain. 

Finally, it will be assumed moreover that
$(P_t)_{t \geq 0}$ is ergodic with respect to $\mu$, which means that 
$\mu$ a-e, $P_t f~\to~\int_{\Omega} f d\mu$ as $t \to \infty$. We notice then 
-- as a basic starting point of the future investigation -- that the variance of function $f$
with respect to $\mu$ can be represented via the semigroup as
$$
\mathrm{Var}_{\mu} (f) = \int_{\Omega} f^2 d\mu - \bigg( \int_{\Omega} f  d\mu \bigg) ^2 
= \lim_{t \to \infty} \bigg (\int_{\Omega} (P_0 f)^2 d\mu - \int_{\Omega} (P_t f)^2 d\mu \bigg) .
$$

The following paragraphs aim at illustrating this general set-up by examples of interest.
The first one discusses the Gaussian model, and its extension to log-concave measures.
The next ones deal with the discrete cube and more general discrete models attached to
Markov chains.

\vskip 2mm 

\subsection{The Gaussian space and continuous setting}

Let $d\mu (x) = e^{-|x|^2/2} \frac {dx}{(2\pi)^{n/2}}$ be the standard Gaussian measure
on $\Omega = \R^n$, and consider the
Ornstein-Uhlenbeck semigroup acting on suitable functions $f : \R^n \to \R$ as
$$
P_t f(x) = \int_{\R^n} f(e^{-t}x + \sqrt{1-e^{-2t}}y) d\mu (y), \quad t \geq 0, \, \, x \in \R^n.
$$
As it is classical, (cf.~e.g. \cite{Ba}, p. 4), the generator of the Ornstein-Uhlenbeck semigroup
$(P_t)_{t \geq 0}$ is given by  $ L = \Delta - x \cdot \nabla $.
The Ornstein-Uhlenbeck semigroup $(P_t)_{t \geq 0}$ is invariant and symmetric with respect to $\mu$,
and ergodic (as easily checked on the previous integral representation). 
That is, for any functions $f,g \in L^2(\mu)$ and any $t \geq 0$,
$\int_{\R^n} f P_t g d\mu = \int_{\R^n} P_t f  g d\mu $, and $P_t f \to f$ (in $L^2(\mu)$)
as $t \to \infty$.
The associated Dirichlet domain of contains $L^2(\mu) \cap C^{2}(\R^n)$, and it
follows from integration by parts that for $C^2$ functions $f,g$ on $\R^n$,
$$
\mathcal{E}(f,g) = \int_{\R^n} f(-Lg) d\mu = \int_{\R^n} \nabla f \cdot \nabla g \, d\mu.
$$
In particular, we have the following decomposition of the Dirichlet form along directions as 
$$
\mathcal{E}(f,f) =  \int_{\R^n} |\nabla f|^2 \, d\mu 
= \sum_{i=1}^n \int_{\R^n} (\partial_i f)^2 d\mu.
$$  

According to these properties, it is immediately checked that the (Gaussian)
noise stability $ \mathcal{S}_{\eta}^{\mathcal{G}} (f) $ of a function $ f : \R^n \to \R$
as described in the introduction may be reinterpreted in terms of the semigroup in the following way

\begin{lemma} \label{lou}
For $f : \R^n \to \R$ and $\eta >0$,
$$
\mathcal{S}_{\eta}^{\mathcal{G}} (f)  = \mathrm{Var}_{\mu}(P_{t/2} f),
$$
with $e^{-t} = \sqrt{1 - \eta^2}.$
\end{lemma}

\vskip 2mm 

The preceding Gaussian example may be amplified along the same lines to cover families
of log-concave measures on $\R^n$.
Indeed, let $d\mu(x) = e^{-V(x)}dx$ be a probability measure on the Borel sets of $\R^n$
where $V : \R^n \to \R$ is a smooth potential,
invariant and symmetric with respect to the second order diffusion operator
$L = \Delta - \nabla V \cdot \nabla$ with associated semigroup $P_t = e^{tL}$, $t \geq 0$.
As in the Gaussian case, integration by parts yields, for smooth functions $f,g$ on $\R^n$,
$$
\mathcal{E}(f,g) = \int_{\R^n} f(-Lg) d\mu = \int_{\R^n} \nabla f \cdot \nabla g \, d\mu,
$$
and therefore a similar decomposition of the Dirichlet form $\mathcal{E}(f,f)$.

We will be concerned more generally
with products of such measures, namely $\mu = \bigotimes_{i=1}^m \mu_i$ on 
$\R^n~=~\R^{n_1} \times \cdots \times \R^{n_m}$
where, for $i =1, \ldots, m$, $d\mu_i (x)$  is of the form $e^{-V_i(x)} dx$ with 
$V_i : \R^{n_i} \to \R $ some smooth potential.
The product generator $L$ of the $L_i$ is given by
$$
L = \sum_{i=1}^m \mathrm{Id}_{\R^{n_1}} \otimes \cdots \otimes  \mathrm{Id}_{\R^{n_{i-1}}} \otimes L_i 
\otimes \mathrm{Id}_{\R^{n_{i+1}}} \otimes \cdots \otimes  \mathrm{Id}_{\R^{n_m}}
$$
with associated (product) semigroup $(P_t)_{t \geq 0}$.
Setting $\nabla_i$ for the gradient in the direction $\R^{n_i}$, the Dirichlet form is decomposed into
$$
\mathcal{E}(f,f) =  \int_{\R^n} |\nabla f|^2 \, d\mu 
= \sum_{i=1}^m \int_{\R^n} |\nabla_i f|^2 d\mu.
$$  

In this context, we may then state the classical decomposition of the variance (and
accordingly of noise stability) along the semigroup which will be the
starting point of our investigation.

\begin{lemma} \label{dovg}
For every smooth $f : \R^n \to \R$ and every $t \geq 0$,
\begin{equation} \label{eq.91}
\mathrm{Var}_{\mu}(P_t f) = 2 \int_t^{\infty} \sum_{i=1}^m \int_{\R^n} |\nabla_i P_s f|^2 d\mu \, ds. 
\end{equation}
\end{lemma}

\begin{proof}
By ergodicity and the fundamental theorem of calculus,
\begin{eqnarray*} \label{eq.77}
\mathrm{Var}_{\mu}(P_t f) & = &  \int_{\R^n} (P_t f)^2 d\mu - \left( \int_{\R^n} f d\mu \right)^2 \\
& = & - \int_t^{\infty} \frac{d}{ds} \int_{\R^n} (P_s f)^2 d\mu \, ds \\
&=& - 2 \int_t^{\infty}  \int_{\R^n} P_s f L(P_s f) d\mu \, ds. \\
\end{eqnarray*}
Then, after integration by parts,
$$
\mathrm{Var}_{\mu}(P_t f)
=  2 \int_t^{\infty} \mathcal{E} (P_s f, P_s f) ds 
=  2 \int_t^{\infty} \sum_{i=1}^m \int_{\R^n} |\nabla_i P_s f|^2 d\mu \, ds
$$
from which the lemma follows.
\end{proof}

\subsection{The discrete cube}

On the discrete cube $C_n = \{-1,1\}^n$, let us consider first the case 
of the uniform measure $\nu$ ($ p = \frac {1}{2}$).
The biased case is an adaptation of this particular case, as
developed below.
Consider the Bonami--Beckner semigroup $(T_t)_{t \geq 0}$
acting on the discrete cube given in its explicit form by
$$
T_t f(x) = \int_{C_n} f(\omega) \prod_{i=1}^n (1+e^{-t}x_i \omega_i) d \nu (\omega),
  \quad t \geq 0, \, \, x \in C_n.
$$
It is classical and easily checked that $(T_t)_{t \geq 0}$ is a Markov semigroup with invariant
and reversible probability measure $\nu$. Its generator $L$ may be described as
$$
L = \frac{1}{2} \sum_{i=1}^n D_i
$$
where $D_i$ the $i$-th derivative of $f : C_n \to \R$ defined by 
$ D_i (f) (x) = f(\tau_i x) - f(x)$
with $x= (x_1, \ldots, x_n)$ and $\tau_i x$ as in the introduction. 

It is actually of some interest to emphasize the latter properties in the Fourier--Walsh
decomposition.
Denote $[n]$ for $\{1, \ldots, n \}$. $L^2(C_n, \nu)$ is an Euclidean space with respect to the standard scalar product $\langle \cdot , \cdot \rangle_{L^2(\nu)}$. It is well known (see e.g. [T1]) that there is an orthogonal basis, the Fourier--Walsh basis $(W_{S})_{S \subset [n]}$  defined by $W_S (x) = \prod_{i \in S} x_i$. 
Each function $f \in L^2 (\nu)$ can then be decomposed into $f = \sum_{S \subset [n]} \hat{f} (S) W_s $ with 
$\hat{f}(S) = \langle f, W_S \rangle_{L^2(\nu)}$. 
Using that 
$$
 \prod_{i=1}^n (1+e^{-t}x_i \omega_i)  = \sum_{S \subset [n]} e^{- t |S|} \prod_{i \in S} x_i \omega_i 
= \sum_{S \subset [n]} e^{- t |S|}  W_S (x) W_S (\omega)
$$
with $|S|$ to be the cardinal of the set $S$, it implies that
$$
T_t (f) = \sum_{S \subset [n]} e^{-t|S|} \hat{f} (S) W_s
$$
for every $t >0$. The semigroup property then easily follows.
Moreover,
$$
L W_{S}(x) = - \frac{1}{2} \sum_{i=1}^n (W_S( x) -W_S(\tau_i x)) = - |S| W_{S} (x)
$$ 
since $W_S(x) - W_S(\tau_i x) = 2  W_S(x) 1_{i \in S}(x)$ from which $L$ is seen
as the generator of $(T_t)_{t \geq 0}$.

The associated Dirichlet domain is $L^2(\nu)$ since there are no regularity assumptions.
Notice that the Dirichlet form $\mathcal{E}$ may again be decomposed along directions by
\begin{equation}
\mathcal{E} (f,f) = \frac{1}{4} \, \sum_{i=1}^n \int_{C^n} |D_i(f)|^2 d\nu. \label{eq.10}
\end{equation}
Besides, for a real-valued function $f$ and $i = 1, \ldots , n$, the influence
$I_i(f)$ of the $i$-th coordinate on the function $f$ as presented in the introduction is represented as
$$
I_i(f) = \| D_i f \|_{L^1(\nu)}.
$$
With this background, as in the Gaussian space, the noise stability may be expressed in terms of the
semigroup $(T_t)_{t \geq 0}$.

\begin{lemma} \label{lbb}
For $f : C_n \to \R$ and $\eta > 0$,
$$
S_{\eta}^c (f) = \mathrm{Var}_{\nu}(P_{t/2} f),
$$
with $e^{-t} = 1 - \eta.$
\end{lemma}

\vskip 2mm

The preceding construction may be developed
similarly on the biased case, i.e. when the cube is endowed with $ \nu_{p} = (p \delta_{-1} + q \delta_1)^{\otimes n},$ where $p+q =1.$
The Fourier--Walsh basis $(\omega_S)_{S \subset [n]}$ is given here by
$$
\left( \left(\frac{p}{q}\right)^{(n - |S|)/2}\prod_{i \in S} x_i  \right)_{S \subset [n]}.
$$
Set $L = 2 p q \sum_{i=1}^n D_i$. Arguing as in the case
$p = \frac {1}{2}$, if we set $T_t^p f = \sum_{S \subset [n]} e^{-t |S|} \hat{f}(S) \omega_S$,
then $L$ is the generator of $(T_t^p)_{t \geq 0}$. In the same way
$$
\mathcal{S}_{\eta} (f) = \mathrm{Var}_{\nu_p}(T_{t/2}^p f).
$$
In addition, the Dirichlet form $\mathcal{E}$ takes the same form
$$
\mathcal{E} (f,f) = pq \, \sum_{i=1}^n  \int_{\{-1,1\}^n} |D_i(f)|^2 d\nu_p. 
$$

For both the uniform and biased cube,
the decomposition of the variance along the semigroup is similar to the one
emphasized in the continuous setting in Lemma~\ref {dovg}.

\begin{lemma} \label{dovc}
For every $f : C_n \to \R$, every $t \geq 0$, and every $p \in (0,1)$,
\begin{equation} \label{eq.91}
\mathrm{Var}_{\nu_p}(P_t f) = 2 \int_t^{\infty} \sum_{i=1}^n \int_{C^n} |D_i T_s^p f|^2 d\nu \, ds. 
\end{equation}
\end{lemma}
\vskip 2mm

\subsection{General discrete case}

In this subsection, we discuss extensions of the discrete cube model to general discrete spaces, that we assume to be finite (as it will be the case for concrete illustrations).

Let $\Omega$ be a finite space with probability measure $\mu$
on which there is a Markov kernel $K$, invariant
and reversible with respect to $\mu$, i.e.
such that
$$
\forall (x, y) \in \Omega^2, \quad
 \sum_{x \in \Omega} K(x,y) \mu (x) = \mu (y) \quad \mathrm{and} \quad K(x,y) \mu(x) = K(y,x) \mu(y).
$$ 
Define $L$ by $L = K - Id$, generator of the semigroup $P_t = e^{tL}$, $t \geq 0$.
The associated Dirichlet form is given by
$$
\mathcal{E}(f,g) = \int_{\Omega} f(-Lg) d\mu = \frac{1}{2}\sum_{x,y \in \Omega}
 (f(x)-f(y))(g(x)-g(y)) K(x,y) \mu(y)
$$
for functions $f,g$ on $\Omega$.

\vskip 2mm

The discrete cube model enters this setting by a suitable choice of the kernel $K$.
Consider namely the operator given by $Lf = \int_{\Omega} f d\mu  - f$,
i.e $K f = \int_{\Omega} f d\mu,$ or $K = \mathrm{diag}(\mu(x))_{x \in \Omega}$.
In particular, a simple computation shows that
$$
\mathrm{Var}_{\mu} (f) = \int_{\Omega} f (-Lf) d\mu = \mathcal{E} (f,f).
$$

An interesting instance of the preceding, extending the case of the Boolean cube, is given by such product spaces with product measures 
$$
\Omega = \Omega_1 \times \cdots \times \Omega_n \quad \mathrm{with} 
 \quad \mu = \mu_1 \otimes \cdots \otimes \mu_n,
$$ 
when we take product of the above Markov operators. 
That is, set, for $i  =1, \ldots, n $, and $f : \Omega \to \R$
$ L_i f = \int_{\Omega_i} f d\mu_i - f $
and consider the generator on the product space given by
$$
Lf = \sum_{i=1}^n L_i f .
$$
In this case the Dirichlet form $\mathcal{E}$ may be decomposed as
\begin{equation}
\mathcal{E} (f,f) = \sum_{i=1}^n  \int_{\Omega_i} L_i(f)^2 d\mu_i. \label{eq.9}
\end{equation} 
This setting, which will be referred to in the following as discrete product structure,
covers the cube with
$\Omega_1  = \cdots = \Omega_n = \{-1, 1 \}$, each of which being equipped with the
measure $p \delta_{-1} + q \delta_{1}$. Among other relevant examples one can take for any
$i$, $\Omega_i = \Z/q\Z$ for any $q \geq 3$ endowed with uniform measure. In this context, the influence of the $i$-th coordinate is naturally defined as $\| L_i f \|_1$ (notice that both definitions over the Boolean cube agree up to a constant depending on the bias $p$). 

\vspace{2mm}

The preceding can be extended to more general Cayley or Schreier graphs
(see \cite{CE-L}, \cite{O-W}), covering therefore non-product examples.
Let $G$ be a (finite) group for which there
is a finite set of generators $S$, symmetric, i.e. $S^{-1} =S$, and stable by conjugacy.
Assume that $G$ is acting transitively on a finite set $\Omega$ and denote for $g \in G$, $x \in \Omega$, $x^g$
the action of $g$ on $x$. The associated Schreier graph is the graph of vertices $\Omega$ and 
edges $(x,y)$ if and only if there is a $s \in S$ such that $y = x^s$.  
A Cayley graph corresponds to the particular case $G =\Omega$, consisting therefore of  
the set of vertices $(g)_{g \in G}$ with edges $(g, gs)_{g \in G, s \in S}$. 
One basic example  is the symmetric group $\mathfrak{S}_n$ with generating
set transpositions $\mathcal{T}_n$. 

Given a Cayley of Schreier graph $G$, consider the
transition kernel $K$ given by
$$
K(g_1,g_2) = \frac{1}{|S|} 1_{S}(g_1g_2^{-1}), \quad g_1, g_2 \in G,
$$
corresponding to the random walk to nearest neighbour
and the uniform measure $\mu$ on $G$.
Again, it generates the family of semigroups $(P_t = e^{tL})_{t \geq 0}$, with $L = K - Id$. The associated Dirichlet forms $\mathcal{E}$ can be written as
$$
\mathcal{E}(f,f) = \frac{1}{2|S|} \sum_{g \in G} \sum_{s \in S} [f(gs) - f(g)]^2 \mu(g) = \frac{1}{2|S|} \sum_{s \in S} \|D_{s} f\|_{L^2(G)}^2, 
$$
where $D_s f : g \mapsto f(gs) - f(g)$ in the Cayley graph case and 
$$
\mathcal{E}(f,f) = \frac{1}{2|S|} \sum_{x \in \Omega} \sum_{s \in S} [f(x^s) - f(x)]^2 \mu(x) = \frac{1}{2|S|} \sum_{s \in S} \|D_{s} f\|_{L^2(\Omega)}^2,
$$
where $D_s f : x \mapsto f(x^s) - f(x)$  in the Schreier graph case.

Note finally that the influence of a generator element
$s \in S$ on a function $f$ is then naturally defined in this context by $\| D_s f \|_1$.

\vskip 2mm

\subsection{The generalized version of noise stability}

In this section, we extend the definition of noise stability to the different models presented in the preceding subsections.

Consider therefore the preceding setting of a probability space $(\Omega, {\cal A}, \mu)$
equipped with a semigroup $(P_t)_{t \geq 0}$ with generator $L$ and Dirichlet form ${\cal E}$,
invariant and symmetric with respect to $\mu$.

Say that the couple $(L, \mu)$
satisfies a spectral gap, or Poincar{\'e}, inequality whenever there exists $\lambda > 0$ such that
\begin{equation}
\lambda \mathrm{Var}_{\mu}(f) \leq \mathcal{E}(f,f) \label{eq.3}
\end{equation}
for every function $f$ on the Dirichlet domain. 
The spectral gap constant is the largest $\lambda$ such that $\eqref{eq.3}$ holds. It is equivalent to the fact that for every functions $f$ in $L^2(\mu)$, and every $t>0$, 
\begin{equation} \label{eq.89}
\mathrm{Var}_{\mu} (P_t f)  \leq e^{-2\lambda t}\mathrm{Var}_{\mu} (f).
\end{equation}
This follows immediately from Gronwall's lemma since $\frac{d}{dt} \mathrm{Var}_{\mu} (P_t f)  = \frac{d}{dt} \|P_t f \|_2^2  = - 2 \mathcal{E}(P_t f, P_t f)$. For further purposes, it is then not hard to check (see \cite{CE-L}), that the spectral gap inequality with constant $\lambda$ is equivalent 
to the fact that for every centered function $f$ (i.e. $\int_{\Omega} f d\mu =0$),
\begin{equation}
\forall t >0, \, \mathrm{Var}_{\mu}(f) \leq \frac{1}{1-e^{-\lambda t}} \left( \| f \|_2^2 - \| P_t f \|_2^2 \right). \label{eq.12}
\end{equation}

As standard examples, $\lambda = 1$ for the standard Gaussian measure on $\R^n$ and similarly for the discrete cube equipped with any biased measure $\nu$. As a result, from Lemma \ref{lou} and Lemma \ref{lbb}, the spectral gap inequality in its formulation \eqref{eq.89} implies 
$$
\mathcal{S}_{\eta}^{c} (f) \leq (1-\eta) \|f\|_2^2 \quad \mathrm{(resp.} \, \mathcal{S}_{\eta}^{\mathcal{G}} (f) \leq \sqrt{1-\eta^2} \|f\|_2^2)
$$ for (centered) functions of the discrete cube (resp. of the Gaussian space). Thus, for a sequence of (centered) functions $(f_n)_{n \geq 0}$ defined on the discrete cube or on the Gaussian space such that $\inf_{n \geq 0} \|f_n \|_2 >0$, the spectral gap inequality does not predict if it is a noise sensitive sequence.

We then extend the definition of noise stability of sets/functions in our both discrete and continuous setting in order to preserve this property.

Turning to log-concave measures, we give the following definition. 
\begin{definition} \label{nsc}
Let $(\R^n, \mu)$, where $\mu$ is a product a log-concave probability measures, with $\lambda$ its spectral gap constant. For a measurable function $f$ in $L^2(\mu)$, we define its noise stability with parameter $\eta \in [0,1)$ by  
\[ 
\mathcal{S}_{\eta} (f) = \mathrm{Var}_{\mu} (P_t f), 
\]
where $e^{-2 \lambda t} = \sqrt{1-\eta^2}$, 
and similarly for Borel sets $A \subset \R^n$, $\mathcal{S}_{\eta} (A) = \mathcal{S}_{\eta} ({\bf 1}_A)$.
\end{definition}

In the context of Schreier graphs, extending the case of the Boolean cube, the definition of noise stability is the following. 

\begin{definition} \label{nsd}
Let $\Omega$ be a finite Schreier graph with spectral gap constant $\lambda$. For a function $f \, : \, \Omega \to \R$, we define its noise stability with parameter $\eta \in [0,1)$ by
\[
\mathcal{S}_{\eta} (f) = \mathrm{Var}_{\mu} (P_t f), 
\]
where $e^{-2 \lambda t} = 1-\eta$, and similarly for sets $A \subset \Omega$, $\mathcal{S}_{\eta} (A) = \mathcal{S}_{\eta} ({\bf 1}_A)$.
\end{definition}

Notice that, in more probabilistic terms, since $(P_t)_{t \geq 0} = (e^{-t} e^{tK})_{t \geq 0}$ and by symmetry of $(P_t)_{t \geq 0}$ with respect to $\mu$, 
$$
\mathcal{S}_{\eta} (f)  = \E_{\mu \otimes \mu} (f(x)f(y)) - (\E_{\mu} f)^2,
$$
where $y$ is obtained from $x$ by acting randomly $m$ elements of the generating set $S$ where $m \sim \mathcal{P}(2t)$ the Poisson law of parameter $t =t(\eta)$.

From subsection $2.2$, the last definition agrees with $\mathcal{S}_{\eta}^c$ in the case of the Boolean cube. Besides, in the general case the spectral gap inequality leads by \eqref{eq.89} to the same upper bound $\mathcal{S}_{\eta} (f) \leq (1-\eta) \mathrm{Var}_{\mu} (f)$. Thus, in this general case, a quantitative relationship of the form \eqref{eq.noise1} is a similar improvement upon the spectral gap inequality.

\subsection{The hypercontractive tool and the decomposition along ``directions''} 

In this section, we describe the main functional tool which will be used in the
proof of the main results of this work. That is, the hypercontractive property of the underlying semigroups, or in 
its equivalent formulation logarithmic Sobolev inequalities. Besides we present in a fairly abstract setting the common decomposition of the Dirichlet form along ``directions'' with the associated influences with respect to these directions.

Say that $(L, \mu)$ satisfies a logarithmic Sobolev inequality whenever there exists $\rho > 0$ such that
\begin{equation}
\rho \, \mathrm{Ent}_{\mu} (f^2) \leq 2 \, \mathcal{E}(f,f) \label{eq.4}
\end{equation}
for every function $f$ on the Dirichlet domain.
The logarithmic Sobolev constant is the largest $\rho > 0$ such that $\eqref{eq.4}$ holds. 
Since the work of Gross \cite{G} in the continuous setting (cf. \cite{Ba}, \cite{Aal}, \cite{B-G-L}...) and Diaconis and Saloff-Coste \cite{D-SC} in the discrete setting,  
it is known that a logarithmic Sobolev inequality is equivalent 
to hypercontractivity of the semigroup  $(P_t)_{t \geq 0}$ (see [N]) in the sense
that for all $f \in L^p(\mu)$ and all $t >0$, $ 1 < p < q < \infty$ with $p \geq 1 + (q-1)e^{-2\rho t}$,
\begin{equation}
\| P_t f \|_q \leq \|f\|_p. \label{eq.5}
\end{equation}
The Sobolev logarithmic constant can also be referred as the hypercontractive constant. It is classical that it is lower that the spectral gap constant, i.e. $\lambda \geq \rho$. It is also a main feature of these inequalities that
they are stable by product, so that the spectral gap and logarithmic Sobolev constants of
a product space are the minimum of the spectral gap and logarithmic Sobolev constants of each factor in the product.
Recall that $\lambda = \rho = 1$ for the standard Gaussian measure on $\R^n$
and similarly for the uniform measure on the discrete cube.  For the further purposes,
let us emphasize also that if $d\mu(x) = e^{-V(x)} dx$ on $\R^n$
is such that the
Hessian of the potential $V$ satisfies $\mathrm{Hess} (V) \geq c >0$ (uniformly, as symmetric matrices),
then $\lambda \geq  \rho \geq c$. This very classical fact follows from the pioneer work
of Bakry and \'Emery \cite{B-E} around hypercontractive diffusions.

\vskip 2mm

In each class of examples of the above subsections, a key property is the decomposition
of the Dirichlet form along ``directions''. We will be interested into situations in which 
these directions commute in an appropriate sense with derivation. This may be expressed
in the following abstract formulation, that immediately applies to the various examples
of interest. Namely, assume that there is
a decomposition of the Dirichlet form $\mathcal{E}$  along ``directions'' $i$ 
for some operators $\Gamma_i$ such that for any suitable $f$ on $\Omega$,
\begin{equation}
\mathcal{E} (f,f) = \sum_{i=1}^m  \int_{\Omega} \Gamma_i (f)^2 \, d\mu, \label{eq.8}
\end{equation}
where the operators $\Gamma_i$ are commuting with the semigroup in a sense that there 
exists a real constant $\kappa$ such that for every $i  = 1, \ldots, m $
\begin{equation}
\Gamma_i (P_t f) \leq e^{\kappa t} P_t (\Gamma_i f). \label{eq.7}
\end{equation}
To illustrate this rather formal property, consider the example of the Ornstein-Uhlenbeck semigroup
on $\R^n$ for which $\partial_i P_t f= e^{-t} P_t (\partial _i f)$ yielding therefore \eqref {eq.8}
with $\kappa = 1$. In the framework of a log-concave measure $d\mu(x) = e^{-V(x)} dx$, 
it is known (see e.g \cite{Ba}, \cite{B-G-L}) that
whenever the Hessian of the potential $V$ satisfies $\mathrm{Hess} (V) \geq c$
where $ c \in \R$,
for every smooth $f : \R^n \to \R$ and every $t \geq 0$,
\begin{equation} 
|\nabla P_t f| \leq e^{-ct} P_t ( |\nabla f|). \label{eq.6}
\end{equation}
Therefore, a product of log-concave measures $d\mu_i(x) = e^{-V_i(x)} dx$,
$i  = 1, \ldots, m$, for which each potential $V_i$ satisfies
$\mathrm{Hess} (V_i) \geq c_i$ with $ c_i \in \R$, is another instance of the
decomposition \eqref {eq.8} with $\Gamma_i = \nabla_i$ and $\kappa = \min c_i$.
In the discrete product setting, we may take $\Gamma_i = L_i$.
On the cube, the relations  $D_i T_t^p f = T_t^p D_i f$ then ensure the suitable
commutation with $\kappa = 0$.
 
Such general decompositions have been emphasized in \cite{CE-L} in connections with
the study of influences. They are similarly useful here, and together with hypercontractivity
will be at the root of the main conclusions. Indeed, assuming ergodicity and proceeding as in the proof of Lemma \ref{dovg}, 
the noise stability can then be expressed as 
\[
\mathcal{S}_{\eta} (f) =  \sum_{i=1}^m \int_{t}^{\infty} \int_{\Omega} \Gamma_i (P_s f)^2 \, d\mu \, ds, 
\]
where $t = t(\eta)$ is given as in Definition \ref{nsc} in the continuous case or Definition \ref{nsd} in the discrete one. Another example of interest given by the Euclidean spheres will be presented in Section 5 below.

To close this paragraph, it is worthwhile mentioning
that the content of our statements are invariant under translation of the functions
by a constant. Therefore, in the remaining of the paper, it will be implicitly assumed that
all functions are centered.

\section{The Gaussian and log concave settings} 

\vskip 2mm

The section will be devoted to the proof of Theorem~\ref {blg1}, actually in
a more general formulation. 

Consider therefore a probability measure $\mu$ on
$\R^n = \R^{n_1} \times \cdots \times \R^{n_m}$ of the form 
$d\mu(x) = \bigotimes_{i=1}^m e^{-V_i(x)}dx$ with $\mathrm{Hess} (V_i) \geq c > 0$
for every $i = 1, \ldots, m$.
Recall that, if $\rho$ designs the hypercontractive constant and $\lambda$ the Poincar\'e constant, in this setting it holds $\lambda \geq \rho \geq c$. 

\begin{theorem} \label {bl}
Let $\mu$ as above and let
$f : \R^n \to \R$ be $C^1$-smooth and in $L^2(\mu)$. Then, for every $\eta \in (0,1)$, 
$$
\mathcal{S}_{\eta}(f) \leq \mathrm{max}\left(4,\frac{4}{c}\right) (1-\eta^2)^{\frac{c}{4 \lambda}}
\bigg( \sum_{i=1}^m \| \nabla_i f \|_1^2 \bigg)^{\alpha(\eta)} \|f\|_2^{2-2\alpha(\eta)}
$$
where
$$
\alpha(\eta) = \frac{1-(1-\eta^2)^{\frac{\rho}{4 \lambda}}}{2}.
$$
\end{theorem}

To compare with the results of \cite{K-M-S2} in the Gaussian case (corresponding therefore
to $m=n$, $n_1 = \cdots = n_m =1$, and $V_i$ quadratic) for which $\lambda = \rho = c = 1$,
Theorem \ref{bl} indicates that for all $f : \R^n \to \R$ and every $\eta \in (0,1)$, 
$$
\mathcal{S}_{\eta}^{\mathcal{G}}(f) \leq 4 (1-\eta^2)^{1/4} 
   \bigg( \sum_{i=1}^n \|\partial_i f\|_{L^1(\mu)}^2\bigg)^{\frac{1-(1-\eta^2)^{1/4}}{2}} 
   \|f\|_2^{1+(1-\eta^2)^{1/4}}.
$$
It may always be assumed 
that $\sum_{i=1}^n \|\partial_i f\|_{L^1(\mu)}^2 \leq \|f \|_2^2$ otherwise the above inequality is implied by the spectral gap inequality (in its formulation $\mathcal{S}_{\eta}^{\mathcal{G}} (f) \leq \sqrt{1-\eta^2} \|f\|_2^2$). 
As 
\[
1-(1-\eta^2)^{1/4} \geq \frac{\eta^2}{4},
\]
it thus yields the inequality of \cite{K-M-S2} with $C_1 =4$ and
$C_2 = \frac{1}{8}$.

\begin{proof} [Proof of Theorem \ref{bl}]
We make the assumption that $f$ is such that
$$
\sum_{i=1}^m \| \nabla_i f \|_1^2  \leq \|f\|_2^2,
$$
otherwise there is nothing to prove (again, by the spectral gap inequality). We set $t = -\frac{\log (1-\eta^2)}{4 \lambda}$, 
so that $\mathcal{S}_{\eta}(f) = \mathrm{Var}_{\mu}(P_t f)$.

We start with Lemma \ref{dovg} from which
$$
\mathrm{Var}_{\mu}(P_t f) = 2 \int_t^{\infty} \sum_{i=1}^m \int_{\R^n} |\nabla_i P_s f|^2 d\mu \, ds. 
$$
The main step of the proof consists in the following cut-off argument.
Namely, for $i = 1, \ldots, m$ and $M>0$,
\begin {equation} \label {eq.cutoff}
\int_{\R^n} |\nabla_i P_s f|^2 d\mu  
=  \int\limits_{\{ | \nabla_i P_s f | \leq M \|\nabla_i f \|_1 \}} |\nabla_i P_s f|^2 d\mu 
+ \int\limits_{\{ | \nabla_i P_s f | > M \|\nabla_i f \|_1 \}} |\nabla_i P_s f|^2 d\mu.
\end {equation} 
The first integral is bounded from above as
\begin{eqnarray*}
\int\limits_{\{ | \nabla_i P_s f | \leq M \|\nabla_i f \|_1 \}} |\nabla_i P_s f|^2 d\mu &\leq & M \|\nabla_i f \|_1 \int_{\R^n} |\nabla_i P_s f| d\mu  \\
&\leq & M \|\nabla_i f \|_1 e^{-cs} \int_{\R^n} P_s (|\nabla_i  f|) d\mu \\
\end{eqnarray*}
where we use in the last inequality the commutation property $\eqref{eq.6}$ 
which ensures that $|\nabla_i P_s f| \leq e^{-cs} P_s( |\nabla_i f|)$. 
Since the measure $\mu$ is invariant with respect to $(P_s)_{s \geq 0}$, it follows that
$$
\int\limits_{\{ | \nabla_i P_s f |
\leq M \|\nabla_i f \|_1 \}} |\nabla_i P_s f|^2  d\mu \leq e^{-cs} M \|\nabla_i f \|_1^2.
$$
After integrating in time and summing over $i = 1, \ldots, m$, we reach a first bound
\begin{equation} \label{eq.21}
\sum_{i=1}^m \int_t^{\infty}  \int\limits_{\{ | \nabla_i P_s f | 
\leq M \|\nabla_i f \|_1 \}} |\nabla_i P_s f|^2 d\mu \, ds
\leq \frac{1}{c} e^{-ct} M \sum_{i=1}^m \| \nabla_i f \|_1^2.
\end{equation}

\vskip 2mm 

We now focus on bounding from above 
$$
\sum_{i=1}^m \int\limits_{\{ |\nabla_i P_s f | > M \| \nabla_i f \|_1 \}} |\nabla_i P_s f|^2 d\mu.
$$
To this task,
for every $ i =  1, \ldots, m $, H{\"o}lder's inequality applied to $ |\nabla_i P_s f|^2 $ and 
${\bf 1}_{ \{ | \nabla_i P_s f | > M \|\nabla_i f \|_1 \} }$ yields 
\begin{equation} \label{eq.1989}
\int\limits_{\{ | \nabla_i P_s f | > M \|\nabla_i f \|_1 \}} |\nabla_i P_s f|^2 d\mu 
\leq \mu \big \{ | \nabla_i P_s f | > M \|\nabla_i f \|_1 \big \}^{\frac{1}{q}} \| \nabla_i P_s f \|_{2p}^2
\end{equation}
for every $p, q \geq 1$ such that $\frac{1}{p} + \frac{1}{q} = 1$. Yet, since $P_s f = P_{s/2} ( P_{s/2} f)$, using $\eqref{eq.6}$ once more,
$$ 
|\nabla_i P_s f |^{2p} \leq  e^{-cps}   [P_{s/2} (|\nabla_i P_{s/2} f|)]^{2p},
$$
so that by integration
$$ 
\| \nabla_i P_s f \|_{2p}^2 \leq  e^{-cs} \big \|  P_{s/2} (|\nabla_i P_{s/2} f|) \big \|_{2p}^2.
$$
Now,
the hypercontractive property $\eqref{eq.5}$ ensures that, if $\rho$ designs the hypercontractive constant, 
for $p = p(s)$  with $ 2p(s) - 1~=~e^{\rho s},$  
$$
\big \|  P_{s/2} (|\nabla_i P_{s/2} f|) \big \|_{2p}^2 \leq \|\nabla_i P_{s/2} f \|_2^2. 
$$ 
Putting \eqref{eq.1989} and the last two inequalities together, we infer that 
\begin{equation} \label{eq.98}
\int\limits_{\{ | \nabla_i P_s f | > M \|\nabla_i f \|_1 \}} |\nabla_i P_s f|^2 d\mu 
\leq e^{-cs}  \mu \big \{ | \nabla_i P_s f | > M \|\nabla_i f \|_1 \big\}^{\frac{1}{q(s)}}
   \|\nabla_i P_{s/2} f \|_2^2  
\end{equation}
with
$$ 
q(s) = \frac{p(s)}{p(s) -1} = \frac{e^{\rho s} +1}{e^{\rho s} -1}.
$$
By Markov's inequality, we further obtain, using again the commutation $\eqref{eq.6}$
and the invariant property of the semigroup, that
\begin{equation} \label{eq.99}
 \mu \big \{ | \nabla_i P_s f | > M \|\nabla_i f \|_1 \big \} \leq 
\frac{1}{ M \|\nabla_i f \|_1} \int_{\R^n} |\nabla_i P_s f| d\mu \leq \frac{e^{-cs}}{M}.
\end{equation} 
The above bounds \eqref{eq.98} and \eqref{eq.99} therefore imply
\begin{eqnarray*}
\int_t^{\infty} \int\limits_{\{ | \nabla_i P_s f | > M \|\nabla_i f \|_1 \}} |\nabla_i P_s f|^2 d\mu \, ds 
& \leq & \int_t^{\infty} \left( \frac{e^{-c s}}{M} \right)^{\frac{1}{q(s)}}  e^{-c s} \|\nabla_i P_{s/2} f \|_2^2 \, ds \\
& \leq &  e^{-c t}  \int_t^{\infty} \frac{1}{M^{\frac{1}{q(s)}}} \|\nabla_i P_{s/2} f \|_2^2 \, ds.
\end{eqnarray*}
We then notice that the function
$$ 
s \mapsto \frac{1}{q(s)} = \mathrm{tanh} (\rho s/2)
$$
is increasing. Hence, for every $M \geq 1$, 
\[
e^{-ct} \int_t^{\infty} \frac{1}{M^{\frac{1}{q(s)}}}  \|\nabla_i P_{s/2} f \|_2^2 \, ds 
 \leq   \frac{e^{-ct}}{M^{\frac{1}{q(t)}}} \int_t^{\infty}  \|\nabla_i P_{s/2} f \|_2^2 \, ds.
\]
Summing over $i = 1, \ldots, m$,  
\[
\sum_{i=1}^m \int_t^{\infty} \int\limits_{\{ | \nabla_i P_s f | > M \|\nabla_i f \|_1 \}} |\nabla_i P_s f|^2 d\mu \, ds 
\leq  \frac{e^{-ct}}{M^{\frac{1}{q(t)}}} \sum_{i=1}^m \int_t^{\infty}  \|\nabla_i P_{s/2} f \|_2^2 ds.
\]
By Lemma \ref{dovg} again,
$$
\sum_{i=1}^m \int_t^{\infty}  \|\nabla_i P_{s/2} f \|_2^2 ds = \mathrm{Var}_{\mu} (P_{t/2} f) \leq  \| f \|_2^2
$$
(recall that $f$ is centered) so that
\begin{equation} \label{eq.22}
\sum_{i=1}^m \int_t^{\infty} \int\limits_{\{ | \nabla_i P_s f | > M \|\nabla_i f \|_1 \}} |\nabla_i P_s f|^2 d\mu \, ds  \leq   \frac{e^{-ct}}{M^{\frac{1}{q(t)}}} \| f \|_2^2.
\end{equation}

By the decomposition \eqref {eq.cutoff}, 
the two bounds \eqref{eq.21} and \eqref{eq.22} therefore yield 
$$
\mathrm{Var}_{\mu}(P_t f) \leq\mathrm{max}\left(2,\frac{2}{c}\right) e^{-ct} \left(M \sum_{i=1}^m \| \nabla_i f \|_1^2  + \frac{1}{M^{\frac{1}{q(t)}}} \| f \|_2^2 \right).
$$ 
Here, we recall that $M \geq 1$. Given the assumption $\| f \|_2^2 \geq  \sum_{i=1}^m \| \nabla_i f \|_1^2$,
we can choose $M$ such that
$$
M \sum_{i=1}^m \| \nabla_i f \|_1^2 = \frac{1}{M^{\frac{1}{q(t)}}} \| f \|_2^2.
$$ 
We therefore get
$$
 M^{1 + \frac{1}{q(t)}} = \frac{\| f \|_2^2}{\sum_{i=1}^m \| \nabla_i f \|_1^2},
 $$
so that, finally
$$
\mathrm{Var}_{\mu}(P_t f) \leq \mathrm{max}\left( 4,\frac{4}{c} \right) e^{-ct} \left( \sum_{i=1}^n \| \nabla_i f \|_1^2 \right)^{\frac{1}{1+q(t)}} \|f\|_2^{2\frac{q(t)}{1+q(t)}}.
$$ 
Replacing $q(t)$ by its explicit form, and recalling $t = -\frac{\log (1-\eta^2)}{4 \lambda}$, we conclude to the announced claim. 
Theorem~\ref {bl} is established.  
\end{proof}

\section{The discrete setting}

\subsection{The case of Boolean cube and other discrete product spaces}

This section develops the corresponding analysis on the cube and discrete product models.

To deal with the Boolean cube $\{-1,1\}^n$, consider the discrete product structure
as emphasized in Subsection 2.3 consisting of 
a product space
$\Omega = \Omega_1 \times \cdots \times = \Omega_n$ with product probability measure 
$\mu = \mu_1 \otimes \cdots \otimes \mu_n$, each factor $(\Omega_i, \mu_i)$ being endowed
with the Markov operator $ L_i f = \int_{\Omega_i} f d\mu_i - f$.
We recall that the Dirichlet form $\mathcal{E}$ admits the decomposition
$$
\mathcal{E} (f,f) = \sum_{i=1}^n  \int_{\Omega_i} L_i(f)^2 d\mu_i,
$$ 
and that the equality $\textrm{Var}_{\mu} f = \mathcal{E}(f,f)$
implies that the spectral gap constant is equal to 1. 
The underlying (product) semigroup $(P_t)_{t \geq 0}$ will be assumed
to be hypercontractive with constant $\rho$ (equivalently, each
$(L_i, \mu_i)$ is hypercontractive with constant $\rho$). 

\vskip 2mm

\begin{theorem}  \label {bl2}
In the preceding setting,
let $f : \Omega \to \R,$ such that $f \in L^2(\mu)$. Then, if $\eta \in [0, 1)$,
$$
\mathcal{S}_{\eta} (f)  \leq 7 \left( \sum_{i=1}^n \| L_i f \|_1^2 \right)^{\alpha(\eta)}
 \|f\|_2^{2-2\alpha(\eta)}
$$
where
$$
\alpha(\eta) = \frac{1-(1-\eta)^{\rho/2}}{2} \geq \frac{\rho}{4} \eta.
$$
\end{theorem}

Theorem \ref{bl2} contains the result of \cite{K-K} for the discrete cube
$\Omega = \{-1,1\}^n $ with $\mu = \nu_p = (p \delta_{-1} + q\delta_1)^{\otimes n}$.
To make the connection, notice that we can assume that
$$  \sum_{i=1}^n \| L_i f \|_1^2  \leq \|f\|_2^2,$$
otherwise there is nothing to prove. With the previous notations,
it is easily seen that for every $i \in \{1, \ldots, n\}$,
$$ 
\| L_i f \|_1 = 2p q \| D_i f \|_1 = 2pq I_i(f).
$$ 
Thereby, we get 
$$
\mathcal{S}_{\eta}^c (f) \leq 7(2p q)^{\frac{\rho}{2} \eta} \left( \sum_{i=1}^n (I_i f)^2 \right)^{\frac{\rho}{4} \eta } \|f\|_2^{2-\frac{\rho}{2} \eta}. 
$$ 
so that, since $4pq \leq 1$,
$$
\mathcal{S}_{\eta}^c (f) \leq 7 \left( \sum_{i=1}^n (I_i f)^2 \right)^{\frac{\rho}{4} \eta} \|f\|_2^{2-\frac{\rho}{2} \eta }. 
$$
On the other hand, it is known (see \cite{CE-L}) the logarithmic Sobolev constant is equal to
$$\rho = \frac{2(p-q)}{\log p - \log q} \quad (=1 \, \mathrm{if} \, p=q).$$
which amounts to the main result of \cite{K-K} as emphasized in the introduction.
In the uniform case (i.e $p=q$), we have $\rho =1$ and the above result is similar to Theorem 4 
of \cite{K-K} with weaker assumptions on $f$. In the biased case, the constants are somewhat weaker for small $p$ or $q$.  

It is worth to point out that the quantitative relationship of Theorem~\ref {bl2}
yields empty results when $\log p$ is of order $\log n$. 
In this range indeed, the theorem does not hold even qualitatively (see \cite{K-K}).

\begin{proof} [Proof of Theorem  \ref{bl2}]

The scheme of proof is completely
similar to the one developed in the continuous setting for Theorem~\ref {bl}.
We nevertheless introduce a further trick, replacing
the decomposition of the variance along the semigroup of Lemma \ref{dovc} by its
restriction to a finite domain of integration in time $[0,T]$ for some $T \geq 0$ in the form
of the identity
$$ 
\| f \|_2^2 - \| P_T f \|_2^2 = - \int_0^T \frac{d}{ds} \|P_s f \|_2^2 \, ds 
= 2 \int_0^{T} \mathcal{E} ( P_s f, P_s f ) ds
= 2 \int_0^{T} \sum_{i=1}^n \int_{\Omega} |L_i P_s f|^2 d\mu \, ds.
$$
Recall that if $e^{-2t} = 1 - \eta,$ $\mathcal{S}_{\eta}(f) = \mathrm{Var}_{\mu}(P_t f)$. Hence, by \eqref{eq.12}, for every $T >0$,
$$
\mathcal{S}_{\eta}(f) \leq \frac{2}{1-e^{-T}} 
\int_t^{t+T} \sum_{i=1}^n \int_{\Omega} |L_i P_s f|^2 d\mu \, ds. 
$$

For each $i = 1, \ldots, n$, we again cut
the integral into two parts with $M \geq 1$. The same commutation and contraction argument yield
$$
\int\limits_{\{ | L_i P_s f | \leq M \|L_i f \|_1 \}} |L_i P_s f|^2 d\mu  
\leq  M \|L_i f \|_1 \int_{\Omega} |L_i P_s f| d\mu  \leq M \|L_i f \|_1^2. 
$$
Therefore
$$
\mathrm{Var}_{\mu}(P_t f) \leq  \frac{2T}{1-e^{-T}} \bigg( M \sum_{i=1}^n \|L_i f \|_1^2  +  \frac{1}{T}\int_t^{t+T} \sum_{i=1}^n 
\int\limits_{\{ | L_i P_s f | > M \|L_i f \|_1 \}} |L_i P_s f|^2 d\mu \, ds \bigg).
$$

Above the truncation level, for each $i = 1, \ldots , n$, the same argument
 based on the H\"older and hypercontractivity inequalities yields
\begin{eqnarray*}
\int\limits_{\{ | L_i P_s f | > M \|L_i f \|_1 \}} |L_i P_s f|^2 d\mu 
  & \leq & \mu \big \{ | L_i P_s f | > M \|L_i f \|_1  \big \}^{\frac{1}{q}} \| L_i P_s f \|_{2p}^2 \\
 & \leq &  \|L_i P_{s/2} f \|_2^2  \, \mu \big \{ | L_i P_s f | > M \|L_i f \|_1 \big \}^{\frac{1}{q(s)}} \\
 & \leq & \frac{1}{M^{\frac{1}{q(s)}}}  \| L_i P_{s/2} f \|_2^2,
\end{eqnarray*}
with $ q(s) = \frac{e^{\rho s} +1}{e^{\rho s} -1}.$ Therefore,
$$ 
\int_t^{t+T} \sum_{i=1}^n \int\limits_{\{ | L_i P_s f | > M \|L_i f \|_1 \}} |L_i P_s f|^2 d\mu \, ds \leq \frac{1}{M^{\frac{1}{q(t)}}} 
\sum_{i=1}^n   \int_t^{t+T} \| L_i P_{s/2} f \|_2^2 \, ds.
$$ 
Since
$$
\sum_{i=1}^n   \int_t^{t+T} \| L_i P_{s/2} f \|_2^2 \, ds
 \leq \sum_{i=1}^n   \int_0^{\infty} \| L_i P_{s/2} f \|_2^2 \, ds = \|f\|_2^2,
$$
we thus get that
$$
\mathrm{Var}_{\mu}(P_t f) \leq\frac{2 \mathrm{max} (1,T)}{1-e^{-T}} \left( M \bigg( \sum_{i=1}^n \|L_i f\|_1^2 \bigg)  + \|f\|_2^2  \frac{1}{M^{\frac{1}{q(t)}}} \right). 
$$ 
The theorem then follows as in the conclusion of the proof of Theorem~\ref {bl}, using
moreover the fact that
$$
\inf_{T >0} \frac{4 \mathrm{max} (1,T)}{1-e^{-T}} = \frac{4}{1-e^{-1}} \leq 7.
$$ 
\end{proof}

\subsection{The case of more general Schreier graphs and non products examples}

This subsection briefly discusses further examples of interest, basically non-product models, for which
the preceding approach may be developed similarly. The basic ingredients for such extension
are the decomposition of the variance into directional derivatives and hypercontractivity.

\vspace{2mm}
Among discrete examples, the recent work of \cite{O-W} investigates the examples of general Schreier or Cayley graphs. 
In the context of Subsection 2.3, recall that the Dirichlet form $\mathcal{E}$ takes the form
$$
\mathcal{E}(f,f) = \frac{1}{2|S|} \sum_{x \in \Omega} \sum_{s \in S} [f(x^s) - f(x)]^2 \mu(x) = \frac{1}{2|S|} \sum_{s \in S} \|D_s f\|_{L^2(\Omega)}^2.
$$
It is shown in \cite{O-W} that the commutation $\eqref{eq.7}$ holds with $\kappa = 0$ (see also \cite{CE-L}).
Denote as usual by $(P_t)_{t \geq 0}$ the underlying semigroup attached to
this Dirichlet form, and let $\lambda$ be the spectral gap constant and $\rho$
the logarithmic Sobolev constant. Noticing that there is a positive constant $C$ such that $\textrm{inf}_{T >0} \frac{\textrm{max}(1,T)}{1-e^{- \lambda T}} \leq \frac{C}{\lambda}$, we get the following theorem
from the general scheme of proof developed in the preceding subsection.

\begin{theorem} \label {corg}
Let $\Omega$ be a Schreier or Cayley graph and $f \, : \, \Omega \to \R$. Then, for any $t \geq 0$,
$$
\mathcal{S}_{\eta}(f) \leq \frac{C}{\lambda 
} \left(\frac{1}{2|S|} \sum_{s \in \mathcal{S}}  I_s(f)^2 \right)^{\alpha(\eta)} \|f\|_2^{2-2\alpha(\eta)},
$$
for some numerical constant $C>0$ and
$$
\alpha(\eta) = \frac{1 - (1-\eta)^{\rho/(2\lambda)}}{2} \geq \frac{\rho}{4 \lambda} \eta.
$$
\end{theorem}

Thus, for a sequence of graphs $(\Omega_n)_{n \geq 0}$, we see from Theorem \ref{corg} that the original Benjamini--Kalai--Schramm criterion holds whenever $\inf_{n \in \N} \frac{\rho_n}{\lambda_n} >0$, since in this case there exists an universal constant $c >0$ such that $\alpha_n(\eta) \geq c \eta$. Besides, Theorem \ref{corg} represents then a quantitative form similar to \eqref{eq.noise1}.

Explicit examples with known respective spectral gap and logarithmic Sobolev constants are given in \cite{O-W}. Among them, we can cite the discrete tori $(\Z/m\Z)^n$, $m \geq 2$. Then, given the product structure, both constants $\lambda_n$ and $\rho_n$ are of the same order, so that Theorem \ref{corg} provides a quantitative B-K-S relationship. The case of the Boolean cube $C_n$ with uniform measure can be seen as the Cayley graph $(\Z/2\Z)^n$ generated by its canonical basis $S = (e_i)_{1 \leq n}$. Then the Dirichlet form is 
$$
\mathcal{E}(f,f) = \frac{1}{2n} \sum_{i =1}^n \|D_i f\|_{L^2(C_n)}^2,
$$
so that the spectral gap constant and the hypercontractive constant are both equal to $\frac{2}{n}$ with this normalization. Rescaling by multiplying the Dirichlet form by $\frac{n}{2}$, we see that the above statement is in this case the one of \cite{K-K} and is therefore contained in Theorem \ref{bl2}. 

The main novelty with respect to the previous subsection is that it also covers two non product models of graphs,
namely the symmetric group and the slices of the Boolean cube. In the case of the symmetric group $\mathfrak{S}_n$, recall its
Cayley graph structure with the generating set given by the subgroup
of the transpositions $\mathcal{T}_n$. The spectral gap $\lambda_n$ is equal to $\frac{2}{n-1}$,
and the logarithmic Sobolev constant $\rho_n$ is greater than $\frac{a}{n \log n}$ for some $a >0$ (see \cite{D-SC}).
Hence, Theorem~\ref{corg} does not improve upon the spectral gap inequality since $\frac{\rho_n}{\lambda_n}$ goes to $0$ as $n$ goes to infinity. Notice that this conclusion also holds for the inequalities established in \cite{CE-L} and \cite{O-W} in the case of the symmetric group.

However, as pointed out in \cite{O-W}, in the case of the the slices of the Boolean cube, both spectral gap and Sobolev logarithmic constants are of the same order.
For $n \geq 1$, $1 \leq k < n$, the slices
(of order $k$) of the Boolean cube are defined
by ${[n] \choose k}  = \{x \in \{0,1\}^n, \sum_{i=1}^n x_i =k \}$. The symmetric group is acting on ${[n] \choose k}$ by $x^{\sigma} = (x_{\sigma(i)})_{1 \leq i \leq n}$, 
so that it has a Schreier graph structure. The generators are given by the transpositions
$\tau_{ij}$, $1 \leq i < j \leq n$, with $x^{\tau_{ij}}$ being obtained from $x$ by switching $i$ and $j$. Then, it is a result of Lee and Yau (\cite{L-Y}) that the spectral gap constant is $\lambda = \frac{1}{n}$ and the logarithmic Sobolev constant $\rho$ satisfies $\rho^{-1} \sim n \log \frac{n^2}{k(n-k)}$. In particular when
$ \frac {k}{n}$ is bounded away from $0$ and $1$, both constants are of the same order. We again rescale the Dirichlet form by multiplying by $n$ so that 
$$
\mathcal{E}'(f,f)  = \frac{1}{n-1} \sum_{1 \leq i < j \leq n } \|D_{\tau_{ij}} f\|_2^2.
$$
Thus, the spectral gap constant associated to $\mathcal{E}'$ is equal to $1$ and the according Sobolev logarithmic constant $\rho'$ satisfies  $\rho'^{-1} \sim  \log \frac{n^2}{k(n-k)}$. In particular, whenever $\frac {k}{n}$ is bounded away from $0$ and $1$, $\rho'$ is bounded away from $0$. Denoting by $\mathcal{S}_{\eta}^s = \mathcal{S}_{\eta}^{s_k}$ the noise stability in this context, Theorem \ref{corg} implies therefore that
\begin{equation} \label{bksslices}
\mathcal{S}_{\eta}^s f  \leq 7  
\bigg(\frac{1}{n-1} \sum_{1 \leq i < j \leq n}  (I_{\tau_{ij}} f)^2 \bigg)^{c \eta} \|f\|_2^{2 - 2c\eta},
\end{equation} 
for some positive constant $c$. We recall that $I_{\tau_ {ij}} f$ is the influence 
of the transposition $\tau_{ij}$ on $f$. 
Therefore, the original \cite{B-K-S} criterion holds over slices of the Boolean cube of order $k \in [c'n, (1-c')n]$, for each positive constant $c'$. That is, if the sequence $\bigg( \frac{1}{n_{\ell}-1} \sum_{1 \leq i < j \leq n_{\ell}}  I_{\tau_{ij}}^2 f_{n_{\ell}} \bigg)_{n_{\ell} \geq 0}$ goes to $0$ when $n_{\ell}$ goes to infinity, then the sequence of $(f_{n_{\ell}})_{n_\ell \geq 0}$ is (asymptotically) noise sensitive. 
It is worth mentioning that a variant of this qualitative result over the slices has been recently established in \cite{Fo}, where the author uses this result in connection with the notion of exclusion sensitivity. We point out that \eqref{bksslices} represents a quantitative version similar to the results of \cite{K-K}, \cite{K-M-S2}. One would be interesting to provide any application of this quantitative result.

\section{The case of the Euclidean spheres}

To conclude, we present the continuous model given by the Euclidean spheres in which the preceding scheme of proof apply. That is, it holds a decomposition $\eqref{eq.8}$ with commutation $\eqref{eq.7}$ as well as hypercontractivity $\eqref{eq.5}$ and spectral gap $\eqref{eq.3}$ (see \cite{CE-L}).  Although we could give a similar definition of noise stability, we will not do so as it has not a clear signification. We will express the results only in term of the heat semigroup associated to the spherical Laplacian.

Let $\mathbb{S}^{n-1} \subset \R^n$ ($n \geq 2$), the $(n-1)$-dimensionnal Euclidean sphere equipped with its normalized surface measure $\mu$. Consider for $i,j \in \{1, \ldots, n\}$,
$ \, D_{i,j} = x_j \partial_i - x_i \partial_j$. 
The Dirichlet form associated with the spherical Laplacian 
$$
\Delta = \frac{1}{2} \sum_{i,j = 1}^n D_{i,j}^2
$$ 
takes the form
$$
 \mathcal{E}(f,f) = \int_{\mathbb{S}^{n-1}} f(- \mathrm{\Delta} f)d\mu = \frac{1}{2} \sum_{i,j=1}^n \int_{\mathbb{S}^{n-1}} (D_{i,j}f)^2 d\mu.
 $$  
Since we clearly have $\Delta D_{i,j} = D_{i,j} \Delta$, $\eqref{eq.7}$ holds with $\kappa = 0$. 
Consider again the heat semigroup $(P_t f)_{t \geq 0} = (e^{t \Delta} f)_{t \geq 0}$
generated by the Laplacian $\Delta$.
It is known (see e.g. \cite{Ba}) that the spectral gap constant and the logarithmic Sobolev constant are both equal to $n-1$. Our result can then be stated as follows.  

\begin{theorem} \label{tsph}
Let $f \, : \,\mathbb{S}^{n-1} \to \R,$ where $n \geq 2$ be $C^1$-smooth and in $L^2(\mu)$.
Then, for any $t \geq 0$,  
$$
\mathrm{Var}_{\mu}(P_t f) \leq 7 \bigg( \sum_{i,j=1}^n \| D_{i,j} f \|_1^2 \bigg)^{\alpha(t)} \|f\|_2^{2-2\alpha(t)},
$$
where 
$$
\alpha(t) = \frac{1-e^{-(n-1)t}}{2}.
$$
\end{theorem}

\begin{proof}
The proof follows the scheme  of the one of Theorem~\ref {bl},
with however the twist emphasized in the discrete framework of Section~4, i.e.
restricting to a finite domain of integration in time $[0,T]$ for some $T \geq 0$
using \eqref {eq.12} and replacing the logarithmic Sobolev constant by its value $n-1$. 
Then, we just notice that for every $\lambda \geq 1$ 
$$
\inf_{T >0} \frac{4 \, \mathrm{max} (1,T)}{1-e^{-\lambda T}} = \frac{4}{1-e^{-\lambda}} \leq 7.
$$ 
\end{proof} 

Recall that the Poincar\'e inequality implies that
$\textrm{Var}_{\mu}(P_t f) \leq e^{-(n-1)t} \textrm{Var}_{\mu}(f)$.
Since the logarithmic Sobolev constant and the spectral gap constant are equal
in this case, Theorem~\ref {tsph} provides some non-trivial bound on $\textrm{Var}_{\mu}(P_t f)$
in specific ranges of $t$ (i.e. when $t$ of order $\frac {1}{n}$). To the best of our knowledge, this inequality is new. 
It would therefore be of interest to exhibit functions $(f_n)_{n \geq 0}$ over the Euclidean spheres such that the sequence $\bigg( \sum_{i,j=1}^n \| D_{i,j}f_n\|_1^2  \bigg)_{n \geq 2}$ goes to $0$.

\vskip 5 mm 

\textit{Acknowledgement. This work has been completed when I made my Ph.D in the University of Toulouse. I warmly thank my Ph.D advisor Michel Ledoux for introducing this problem to me, and for fruitful discussions. I also thank the anonymous referee for helpful comments in improving the exposition.
}

\vskip 10 mm

\noindent
\textsc{Rapha{\"e}l Bouyrie,} \\
\textsc{\small{Laboratoire d'Analyse de Math\'ematiques Appliqu\'es, UMR 8050 du CNRS, Universit\'e Paris-Est Marne-la-Vall\'ee, 5 Bd Descartes, Champs-sur-Marne, 77454 Marne-la-Vall\'ee Cedex, France}} \\
\textit{E-mail address:} \texttt{raphael.bouyrie@upem.fr}
 

\begin{thebibliography} {cc}


\bibitem [Aal] {Aal} \sc  C. An{\'e}, S. Blach{\`e}re, D. Chafa{\"i}, P. Foug{\`e}res, I. Gentil, F. Malrieu, C. Roberto, 
G. Scheffer  \emph{Sur les in{\'e}galit{\'e}s de Sobolev logarithmiques}. \rm{Panoramas et Synth{\`e}ses, Vol. 10 
Soc. Math. de France} (2000)  
 
\bibitem [B-K-S] {B-K-S} \sc  I. Benjamini, G. Kalai, O. Schramm  \emph{Noise sensitivity of boolean functions and applications to percolation}, \rm{Publ I.H.E.S 90, p. 5-43} (1999)

\bibitem [Ba] {Ba} \sc  D. Bakry  \emph{L'hypercontractivit{\'e} et son utilisation en th{\'e}orie des semigroupes.}, \rm{Ecole d'et{\'e} de Probabilit{\'e}s de Saint-Flour. Lecture Notes in Math. 1581, 1-114 1994. Springer} (1994)

\bibitem [B-E] {B-E} {\sc  D. Bakry, M. Emery} \emph{Diffusions hypercontractives.}, \rm{Séminaire de Probabilités XIX, Lecture Notes in Math. Springer-Verlag, New York 1123 p.177-206} (1985)

\bibitem [B-G-L] {B-G-L}
{\sc D. Bakry, I. Gentil, M. Ledoux.} \emph {Analysis and geometry of Markov
diffusion operators.} Grundlehren der mathematischen Wissenschaften~348.
Springer (2014).

\bibitem [Be] {Be} \sc  W. Beckner  \emph{Inequalities in Fourier analysis}, \rm{Ann. of Math. 102(1), 159-182 1975.} (1975)

\bibitem [Bo] {Bo} \sc  A. Bonami  \emph{ \'Etude des coefficients de Fourier des fonctions de $L^p$(G)}, \rm{Annal. Inst. Fourier (Grenoble), 20 (2) 335-402 1971} (1970)

\bibitem [CE-L] {CE-L}  D. \sc Cordero-Erausquin, M. \sc Ledoux. \emph{Hypercontractives measures, Talagrand's inequality, and influences}, \rm{Geometric Aspects of Functional 
Analysis, Springer, 2012, p. 169-189} (2011)

\bibitem [D-SC] {D-SC} \sc  P. Diaconis, L. Saloff-Coste \emph{Logarithmic Sobolev inequalities for finite Markov chains}, \rm{Ann. Appl. Prob. 6, 695-750} (1996)

\bibitem [F] {F} \sc E. Friedgut \emph{Boolean functions with low average sensitivity depend on few coordinates.} \rm{Combinatorica, 27-35} (1998)

\bibitem [Fo] {Fo} \sc M.P. Forsst\"om \emph{A noise sensitivity theorem for Schreier graphs.} \rm{Available online at http://arxiv.org/abs/1501.01828} (2015)

\bibitem [K-K] {K-K} \sc  N. Keller, G. Kindler  \emph{Quantitative relationship between noise sensitivity and influences}, \rm{Combinatorica 2013, Vol.33 : 45-71} (2013)

\bibitem [K-M-S1] {K-M-S1} \sc  N. Keller, E. Mossel, A. Sen  \emph{Geometric influences},  \rm{Ann. of Probab. 2012, Vol.40 : 1135 - 1166} (2012)

\bibitem [K-M-S2] {K-M-S2} \sc   N. Keller, E. Mossel, A. Sen  \emph{Geometric influences II : Correlation inequalities and noise sentitivity}, \rm{Ann. Inst. H. Poincar{\'e}, Vol.50, No4 : 1121 - 1139} (2014)

\bibitem [K-S] {K-S} \sc   G. Kalai, S. Safra  \emph{Threshold phenomena and influence}, \rm{Computational Complexity and Statistical Physics, Oxford Univ. Press, 25-60} (2006)

\bibitem [G] {G} \sc  L. Gross \emph{Logarithmic Sobolev inequalities}, \rm{Amer. J. Math., 97: 1061 - 1083} (1975)

\bibitem [G-S] {G-S} \sc  C. Garban, J. E. Steif \emph{Lectures on Noise Sensitivity and Percolation}, \rm{ Cambridge University Press} (2014)

\bibitem [L] {L} \sc  M. Ledoux \emph{The geometry of Markov diffusion generators.}, \rm{Ann. Fac. Sci., Toulouse IX, 305-366}, (2000)

\bibitem[L-Y] {L-Y} \sc T.Y. Lee, H.T. Yau \emph{Logarithmic Sobolev inequality for some models of random walks} \rm{Ann. Of Probab. 1998; Vol.26 : 1855-1873} (1998)

\bibitem [Ma] {Ma} \sc   O. Mazet  \emph{Classification des Semi-Groupes associ{\'e}s {\`a} une famille de polyn{\^o}mes orthogonaux} \rm{Lecture Notes in Math. 1655, 40-53 1997. Springer}, (1997)

\bibitem [N] {N} \sc E. Nelson \emph{The free Markov field.} \rm{J. Funct. Anal. 12, 211-227} (1973)

\bibitem [O-W] {O-W} \sc R. O'Donnell, K. Wimmer  \emph{KKL, Kruskal-Katona, and monotone nets.} \rm{50th Annual IEEE
Symposium on Foundations of Computer Science (FOCS 2009), 725734, IEEE Computer Soc., Los Alamitos, CA}, (2009)


\bibitem [T1] {T1} \sc   M. Talagrand  \emph{On Russo's Approximate Zero-One Law}, \rm{Ann. of Probab., 22 1576-1587} (1994)

\bibitem [T2] {T2} \sc   M. Talagrand  \emph{How much increasing sets are positively correlated?}, \rm{Combinatorica, 16 243-258} (1996)

\bibitem [T3] {T3} \sc   M. Talagrand  \emph{On Boundary and Influences}, \rm{Combinatorica, 17 275-285} (1997)


\end{thebibliography}
\end{document}